\documentclass[a4paper, 12pt, titlepage, fleqn]{article}
\usepackage{times}
\usepackage{amsthm}
\newtheorem{thm}{Theorem}
\newtheorem{cor}{Corollary}

\newtheorem{lemma}[thm]{Lemma}
\newtheorem{prop}{Proposition}

\setlength{\textwidth}{16cm}
\setlength{\oddsidemargin}{0cm}
\setlength{\evensidemargin}{0cm}
\setlength{\voffset}{0cm}
\setlength{\hoffset}{0.5cm}
\setlength{\topmargin}{1cm}
\setlength{\headheight}{0cm}
\setlength{\headsep}{0cm}
\setlength{\textheight}{22.0cm}
\setlength{\footskip}{1.0cm}
\setlength{\parindent}{0pt}

\usepackage{amssymb,amsmath}

\DeclareMathOperator{\F}{\mathbb{F}}

\begin{document}
		\baselineskip=16.3pt
		\parskip=14pt
		\begin{center}
			\section*{On the Zeta Functions of Supersingular Curves}
			{\large 
			Gary McGuire\footnote{email gary.mcguire@ucd.ie, Research supported by Science Foundation Ireland Grant 13/IA/1914} and
			Emrah Sercan Y{\i}lmaz \footnote {Research supported by Science Foundation Ireland Grant 13/IA/1914} 
			\\
			School of Mathematics and Statistics\\
			University College Dublin\\
			Ireland}

		\end{center}

	\subsection*{Abstract}
	In general, the L-polynomial of a curve of genus $g$ is determined by $g$ coefficients.
We show that the L-polynomial of a supersingular curve of genus $g$
is determined by fewer than $g$ coefficients.

\textbf{Keywords:} Zeta Functions; L-polynomials; Supersingular Curves.

\section{Introduction}
Let $p$ be a prime and $r \ge 1$ be an integer. Throughout this paper we let $q=p^r$. 
Let $X$ be a projective smooth absolutely irreducible curve of genus $g$ defined over $\mathbb{F}_q$.
Consider the $L$-polynomial of the curve $X$ over $\mathbb F_{q}$, defined by
$$L_{X/\mathbb{F}_q}(T)=L_X(T)=\exp\left( \sum_{n=1}^\infty ( \#X(\mathbb F_{q^n}) - q^n - 1 )\frac{T^n}{n}  \right)$$
where $\#X(\mathbb F_{q^n})$ denotes the number of $\mathbb F_{q^n}$-rational points of $X$. 
The L-polynomial is the numerator of the zeta function of $X$.
It is well known that $L_X(T)$ is a polynomial of degree $2g$ with integer coefficients, so we write it as 
\begin{equation} \label{L-poly}
L_X(T)= \sum_{i=0}^{2g} c_i T^i, \ c_i \in \mathbb Z.
\end{equation}
It is also well known that $c_0=1$ and $c_{2g-i}=q^{g-i}c_i$ for $i=0,\cdots,g$. Let $\eta_1,\cdots, \eta_{2g}$ be the roots of the reciprocal of the $L$-polynomial of $X$ over $\mathbb F_q$ (sometimes called the Weil numbers of $X$, or the
Frobenius eigenvalues). For any $n\ge 1$ we have \begin{equation}\label{eqn-root-sum}
\#X(\F_{q^n})-(q^n+1)=-\sum_{i=1}^{2g} \eta_i^{n}.
\end{equation}
We refer the reader to \cite{AFF-HS} for all background on curves.

A curve $X$ of genus $g$ defined over $\mathbb F_q$ is \emph{supersingular} if any of the following equivalent properties hold.

\begin{enumerate}
	\item All Weil numbers of $X$ have the form $\eta_i = \sqrt{q}\cdot \zeta_i$ where $\zeta_i$ is a root of unity.
	\item The Newton polygon of $X$ is a straight line of slope $1/2$.
	\item The Jacobian of $X$ is geometrically isogenous to $E^g$ where
	$E$ is a supersingular elliptic curve.
	\item If $X$ has $L$-polynomial
	$L_X(T)=1+\sum\limits_{i=1}^{2g} c_iT^i$ 
	then
	$$ord_p(c_i)\geq \frac{ir}{2}, \ \mbox{for all $i=1,\ldots	,2g$.}$$
\end{enumerate}

Let $ \sqrt{q}\cdot \zeta_1, \ldots ,  \sqrt{q}\cdot \zeta_{2g}$
be the Weil numbers of a supersingular curve $X$, where the
$\zeta_i$ are roots of unity.
By equation \eqref{eqn-root-sum} for any $n\ge 1$ we have \begin{equation}\label{eqn-root-sum-2}
-q^{-n/2}[\#X(\F_{q^n})-(q^n+1)]=\sum_{i=1}^{2g} \zeta_i^{n}.
\end{equation}

The smallest positive integer $s=s_X$ such that $\zeta_i^{s}=1$ for all 
 $i=1,\ldots	,2g$ will be called the \emph{period} of $X$. 
The period depends on $q$, in the sense that $X(\F_{q^n})$
may have a different period to $X(\F_{q})$.
Note that this is slightly different from the period as defined in \cite{KP}.

In this paper we will prove the following theorems. The first theorem is our main theorem.

\newpage

\begin{thm}\label{reduction-thm}
	Let $X$ be a supersingular curve of genus $g$ defined over $\mathbb F_q$ with
	period $s$.
	Let $n$ be a positive integer, let $\gcd (n,s)=m$ and write $n=m\cdot t$. If $q$ is odd, then we have $$
	\#X(\F_{q^n})-(q^n+1)=\begin{cases}
	q^{(n-m)/2}[\#X(\F_{q^m})-(q^m+1)] &\text{if } m\cdot r \text{ is even},	\\
	q^{(n-m)/2}[\#X(\F_{q^m})-(q^m+1)]\left(\frac{(-1)^{(t-1)/2}t}{p}\right)&\text{if } m \cdot r \text{ is odd and } p\nmid t,
	\\
		q^{(n-m)/2}[\#X(\F_{q^m})-(q^m+1)]&\text{if } m \cdot r \text{ is odd and } p\mid t.
	\end{cases}$$ If $q$ is even, then we have  $$
	\#X(\F_{q^n})-(q^n+1)=\begin{cases}
	q^{(n-m)/2}[\#X(\F_{q^m})-(q^m+1)] &\text{if } m\cdot r \text{ is even},\\
	q^{(n-m)/2}[\#X(\F_{q^m})-(q^m+1)](-1)^{(t^2-1)/8}&\text{if } m \cdot r \text{ is odd}.\\
	\end{cases}$$
\end{thm}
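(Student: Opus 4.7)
\emph{Proof plan.} My approach is to use \eqref{eqn-root-sum-2} to rewrite both sides in terms of sums of powers of the roots of unity $\zeta_i$ and then to relate the sum of $n$-th powers to the sum of $m$-th powers via a Galois automorphism on a cyclotomic field. Setting $\omega_i=\zeta_i^m$, each $\omega_i$ is an $(s/m)$-th root of unity and $\zeta_i^n=\omega_i^t$. Because $\gcd(n,s)=m$ is equivalent to $\gcd(t,s/m)=1$, the automorphism $\sigma_t\in\mathrm{Gal}(\mathbb{Q}(\zeta_{s/m})/\mathbb{Q})$ that raises every $(s/m)$-th root of unity to the $t$-th power is well defined, so
\[
\sum_{i=1}^{2g}\zeta_i^{n}\;=\;\sum_{i=1}^{2g}\omega_i^{t}\;=\;\sigma_t\!\left(\sum_{i=1}^{2g}\zeta_i^{m}\right).
\]
Substituting \eqref{eqn-root-sum-2} on both sides, and using that the integer $\#X(\F_{q^m})-(q^m+1)$ is $\sigma_t$-invariant, I obtain
\[
\#X(\F_{q^n})-(q^n+1)\;=\;q^{n/2}\,\sigma_t\!\left(q^{-m/2}\right)\,\bigl[\#X(\F_{q^m})-(q^m+1)\bigr].
\]

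The remaining task is to compute $\sigma_t(q^{-m/2})$. When $mr$ is even, $q^{m/2}\in\mathbb{Z}$ is fixed by $\sigma_t$, and the first branch of each formula follows immediately. When $mr$ is odd, $q^{m/2}=p^{(mr-1)/2}\sqrt{p}$, so everything reduces to evaluating $\sigma_t(\sqrt{p})/\sqrt{p}$. For odd $p$ with $p\nmid t$, I would invoke the quadratic Gauss sum
\[
\sqrt{p^{*}}\;=\;\sum_{a=1}^{p-1}\!\left(\frac{a}{p}\right)\zeta_p^{a},\qquad p^{*}=(-1)^{(p-1)/2}p,
\]
for which the identity $\sigma_t(\sqrt{p^{*}})=\left(\frac{t}{p}\right)\sqrt{p^{*}}$ follows from a change of summation index. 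Combining this with $\sigma_t(i)=i^t$ in the case $p\equiv 3\pmod 4$ (where $\sqrt{p}=\sqrt{p^{*}}/i$) gives
\[
\frac{\sigma_t(\sqrt{p})}{\sqrt{p}}\;=\;\left(\frac{(-1)^{(t-1)/2}t}{p}\right),
\]
which is precisely the Legendre symbol in the theorem. For $p=2$, the analogous identity $\sqrt{2}=\zeta_8+\zeta_8^{-1}$ reduces $\sigma_t(\sqrt 2)/\sqrt 2$ to $(\zeta_8^t+\zeta_8^{-t})/\sqrt{2}$, and inspection of the four odd residues $t\bmod 8$ shows this equals $(-1)^{(t^2-1)/8}$.

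The degenerate cases follow from a single observation: the identity $\sum_i\omega_i=-q^{-m/2}[\#X(\F_{q^m})-(q^m+1)]$ places a rational multiple of $1/\sqrt{p}$ inside $\mathbb{Q}(\zeta_{s/m})$, which is impossible unless the multiplier is zero, whenever $\sqrt{p}\notin\mathbb{Q}(\zeta_{s/m})$. For odd $q$ this covers the sub-case $p\mid t$ (whence $p\nmid s/m$, so $\sqrt{p}\notin\mathbb{Q}(\zeta_{s/m})$), and for even $q$ the sub-case $t$ even (whence $s/m$ is odd, so $8\nmid s/m$ and $\sqrt{2}\notin\mathbb{Q}(\zeta_{s/m})$); in either instance $\#X(\F_{q^m})=q^m+1$ and both sides of the claimed identity vanish. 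The main technical obstacle will be the odd-$mr$ case, in particular the sign twist $\sigma_t(i)=(-1)^{(t-1)/2}i$ that distinguishes $p\equiv 1\pmod 4$ from $p\equiv 3\pmod 4$ and converts the raw Legendre symbol $\left(\frac{t}{p}\right)$ into $\left(\frac{(-1)^{(t-1)/2}t}{p}\right)$.
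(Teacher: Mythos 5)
Your proposal is correct, and it reaches the result by a more direct route than the paper. The paper works with the polynomial $M_X(T)=L_{X/\F_{q^m}}(q^{-m/2}T)$, factors it over $\mathbb Q$ (resp.\ $\mathbb Q(\sqrt p)$) into cyclotomic pieces $\Phi_d$ (resp.\ $\Phi_d^{+},\Phi_d^{-}$), shows that $x\mapsto x^t$ permutes the roots of each piece or swaps the $\pm$ pieces according to the class of $t$, and then needs a separate rationality argument (writing the partial sums as $a_d\pm b_d\sqrt p$ and observing that $-q^{-n/2}[\#X(\F_{q^n})-(q^n+1)]$ is a rational multiple of $\sqrt p$) to kill the $a_d$ terms. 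You bypass both the factorization and the $a_d,b_d$ bookkeeping by applying the automorphism $\sigma_t$ globally to the single known element $\sum_i\omega_i=-q^{-m/2}[\#X(\F_{q^m})-(q^m+1)]$, reducing everything to the computation of $\sigma_t(\sqrt p)/\sqrt p$ — which is exactly the content of the paper's Lemmas \ref{auto+-p}, \ref{auto+-p-3(4)} and \ref{auto+-2}, proved there by the same Gauss-sum manipulation you describe. So the two arguments share the same engine (the Galois action of $t$-th powering on $\sqrt p$ via Gauss sums) but yours is leaner. Two small points to tidy up when you write it out: (i) the step $\sigma_t(q^{-m/2}S_m)=\sigma_t(q^{-m/2})S_m$ presupposes $\sqrt p\in\mathbb Q(w_{s/m})$ when $mr$ is odd, so your degeneracy observation must be invoked not only when $p\mid t$ but in every sub-case where $\sqrt p\notin\mathbb Q(w_{s/m})$ (e.g.\ $p\equiv 3\bmod 4$ with $p\mid s/m$ but $4p\nmid s/m$); your stated principle already covers this, but the branch list should reflect it. (ii) When $mr$ is odd the period $s/m$ over $\F_{q^m}$ is forced to be even (otherwise $q^{s/2}\notin\mathbb Q$ contradicts integrality of $\#X(\F_{q^s})$), hence $t$ is automatically odd; this is what legitimizes $i^t=(-1)^{(t-1)/2}i$ and $(t^2-1)/8\in\mathbb Z$, and it makes your ``$t$ even'' sub-case for $q$ even vacuous.
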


\begin{thm}\label{thm-l-poly}
	Let $X$ be a supersingular curve of genus $g$ defined over $\mathbb F_q$
	with period $s$. Let $L_X(T)= \sum_{i=0}^{2g} c_i T^i$ be the $L$-polynomial of $X$. Assume we know $c_j$ for $1\le j <l \le g$ where $l \nmid s$. Then $c_l$ is determined. In particular, if we know  $c_j$ for $1\le j \le g$ where $j \mid s$, then all coefficients of the $L$-polynomial of $X$ are determined.
\end{thm}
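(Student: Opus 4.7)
My plan would be to combine two tools: Newton's identities, which give a triangular change of basis between the coefficients $c_1,\ldots,c_{2g}$ of $L_X(T)$ and the power sums $P_n:=\sum_{i=1}^{2g}\eta_i^n$ of the Weil numbers; and Theorem~\ref{reduction-thm}, which for a supersingular curve writes $\#X(\F_{q^n})-(q^n+1)=-P_n$ (by \eqref{eqn-root-sum}) as an explicit function of $\#X(\F_{q^m})-(q^m+1)$ whenever $m=\gcd(n,s)$. Since $c_n$ and $P_n$ determine one another once $c_1,\ldots,c_{n-1}$ are known, the whole argument would reduce to an induction on $l$.

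Concretely, writing $L_X(T)=\prod_{i=1}^{2g}(1-\eta_i T)$ and differentiating $\log L_X(T)$ yields, for $1\le n\le 2g$,
\[
n\,c_n \;=\; -P_n \;-\; \sum_{i=1}^{n-1} c_i\,P_{n-i},
\]
so $c_1,\ldots,c_{n-1}$ together with $P_n$ force $c_n$, and symmetrically $c_1,\ldots,c_n$ force $P_n$. Now suppose $1\le l\le g$, $l\nmid s$, and $c_1,\ldots,c_{l-1}$ are known. Setting $m=\gcd(l,s)$, the hypothesis $l\nmid s$ gives $m<l$, so $P_m$, and hence $\#X(\F_{q^m})-(q^m+1)$, is recoverable from the known coefficients. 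Theorem~\ref{reduction-thm} then determines $\#X(\F_{q^l})-(q^l+1)$, hence $P_l$, and the displayed Newton identity above pins down $c_l$. This proves the first statement.

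The ``in particular'' clause would then follow by running this induction for $l=1,\ldots,g$: at each step either $l\mid s$ and $c_l$ is given by hypothesis, or $l\nmid s$ and $c_l$ is supplied as above from $c_1,\ldots,c_{l-1}$; finally $c_{2g-i}=q^{g-i}c_i$ produces the coefficients in degrees $>g$. I do not anticipate a substantive obstacle: the only delicate point is that Theorem~\ref{reduction-thm} splits into several cases according to the parities of $r$ and $m$, to whether $p\mid t=l/m$, and to the parity of $q$, but in every case the right-hand side is an explicit, unambiguous function of the data, which is all the argument requires.
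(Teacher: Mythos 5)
Your proposal is correct and is essentially the paper's own proof: your Newton identity $nc_n=-P_n-\sum_{i=1}^{n-1}c_iP_{n-i}$ is exactly Proposition \ref{prop-L-Si} with $S_n=-P_n$, and the reduction of $S_l$ to $S_{\gcd(l,s)}$ via Theorem \ref{reduction-thm} followed by induction is the argument given there. No substantive differences.
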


In Section 2 we present the background we will need, which includes some
basic results on quadratic subfields of cyclotomic fields and Gauss sums.
In Section 3 we present the proof of Theorem 1, and in Section 4 
the proof of Theorem 2.
Section 5 contains some applications of our results.

\section{Quadratic Fields as Subfields of a Cyclotomic Field}

Let $n$ be a positive integer. 
Let $w_n$ be a primitive $n$-th root of unity which we may take to be $e^{\frac{2\pi i}{n}}$. 
We call $\Phi_n$ be the $n$-th cyclotomic polynomial and  the set of the roots of $\Phi_n$ is 
$$\{ w_n^i \: | \: 1\le i \le n \text{ and } (i,n)=1\}.$$  It is well-known that $\Phi_n$ is irreducible over $\mathbb Q$.
For odd primes $p$, define $$\sqrt{p^*}=\begin{cases}
\sqrt{p} &\text{if } p \equiv 1 \mod 4,\\
i\sqrt{p} &\text{if } p \equiv 3 \mod 4.
\end{cases}$$
The following propositions are useful for our proofs. 
\begin{prop}\label{intersection}
Let $n$ and $m$ be positive integers with $(n,m)=d$. Then we have $$\mathbb Q(w_n) \cap \mathbb Q(w_m)=\mathbb Q(w_d).$$
\end{prop}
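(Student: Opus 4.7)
The plan is to prove the equality by two separate containments: the easy direction $\mathbb{Q}(w_d) \subseteq \mathbb{Q}(w_n) \cap \mathbb{Q}(w_m)$, and a degree count showing both fields have the same dimension $\phi(d)$ over $\mathbb{Q}$. The first containment is immediate: since $d \mid n$ and $d \mid m$ we may take $w_d = w_n^{n/d} \in \mathbb{Q}(w_n)$ and equally $w_d = w_m^{m/d} \in \mathbb{Q}(w_m)$.

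For the reverse containment I would exploit the fact that $\mathbb{Q}(w_n)/\mathbb{Q}$ and $\mathbb{Q}(w_m)/\mathbb{Q}$ are abelian, hence Galois. Whenever one of two finite extensions of $\mathbb{Q}$ is Galois, the standard degree identity
$$[\mathbb{Q}(w_n,w_m):\mathbb{Q}] \cdot [\mathbb{Q}(w_n)\cap \mathbb{Q}(w_m):\mathbb{Q}] = [\mathbb{Q}(w_n):\mathbb{Q}]\cdot [\mathbb{Q}(w_m):\mathbb{Q}] = \phi(n)\phi(m)$$
holds. The next step is to identify the compositum with a single cyclotomic field, namely $\mathbb{Q}(w_n,w_m) = \mathbb{Q}(w_L)$ where $L$ denotes $\mathrm{lcm}(n,m)$. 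The inclusion $\supseteq$ is clear because $w_n$ and $w_m$ are powers of $w_L$; the inclusion $\subseteq$ follows from Bezout applied to the coprime pair $L/n$ and $L/m$, which lets us write $w_L$ explicitly as a product of a power of $w_n$ and a power of $w_m$.

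Combining these pieces with the elementary identity $\phi(n)\phi(m) = \phi(d)\phi(L)$ (verified prime by prime using $\min(a,b) + \max(a,b) = a+b$ in the exponents), we obtain
$$[\mathbb{Q}(w_n)\cap \mathbb{Q}(w_m):\mathbb{Q}] = \frac{\phi(n)\phi(m)}{\phi(L)} = \phi(d) = [\mathbb{Q}(w_d):\mathbb{Q}].$$
Together with the first containment, this forces $\mathbb{Q}(w_n)\cap \mathbb{Q}(w_m) = \mathbb{Q}(w_d)$.

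No single step is deep; the main thing to be careful about is invoking the compositum–intersection degree formula, which strictly requires at least one of the two fields to be Galois over $\mathbb{Q}$ (otherwise only an inequality would hold). Once Galois-ness is noted and the compositum is recognised as $\mathbb{Q}(w_L)$, the rest is bookkeeping with the totient function.
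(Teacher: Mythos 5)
Your proposal is correct and follows essentially the same route as the paper: both identify the compositum $\mathbb{Q}(w_n,w_m)$ with $\mathbb{Q}(w_{\mathrm{lcm}(n,m)})$ via Bezout, invoke the Galois-theoretic degree relation between compositum and intersection, and conclude by comparing $[\mathbb{Q}(w_n)\cap\mathbb{Q}(w_m):\mathbb{Q}]=\phi(d)$ with the easy containment $\mathbb{Q}(w_d)\subseteq\mathbb{Q}(w_n)\cap\mathbb{Q}(w_m)$. The only cosmetic difference is that the paper writes the degree relation as $[\mathbb{Q}(w_n):\mathbb{Q}(w_n)\cap\mathbb{Q}(w_m)]=[\mathbb{Q}(w_f):\mathbb{Q}(w_m)]$ rather than in your product form.
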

\begin{proof}
	Let $f$ be the least common multiple of $n$ and $m$. Then we have $w_n,w_m\in \mathbb Q(w_f)$. Since $d=\frac{nm}f$, there exists $a,b \in \mathbb Z$ such that $an+bm=\frac{nm}f$ or $\frac am+\frac bn=\frac1f$. Therefore $w_f=w_n^bw_m^a \in \mathbb Q(w_n,w_m)$. Hence we have $\mathbb Q(w_n,w_m)=\mathbb Q(w_f)$.
	
	Since $d\mid n,m$, we have $w_d \in \mathbb Q(w_n) \cap \mathbb Q(w_m)$. Since $\mathbb Q(w_n)$ is a normal extension over $\mathbb Q$, we have $$[\mathbb Q(w_n)\ : \ \mathbb Q(w_n)\cap \mathbb Q(w_m)]=[\mathbb Q(w_f): \mathbb Q(w_m)]=\frac{\phi(f)}{\phi(m)}=\frac{\phi(n)}{\phi(d)}.$$ Therefore, we have $[\mathbb Q(w_n)\cap \mathbb Q(w_m): \mathbb Q]= \phi(d)$. Since  $\mathbb Q(w_d) \subseteq  \mathbb Q(w_n)\cap \mathbb Q(w_m)$ and $[ \mathbb Q(w_d): \mathbb Q]=\phi(d)$, we have $\mathbb Q(w_n) \cap \mathbb Q(w_m)=\mathbb Q(w_d)$.
\end{proof}

\begin{prop}\label{gauss-sum}
If $p$ is an odd prime, then $$\sqrt {p^*}=\sum_{j=0}^{p-1}\left(\frac{j}{p}\right)w_p^j \in \mathbb Q(w_p).$$ Moreover, if $p\equiv 3\mod 4$, then $\sqrt{p} \not \in \mathbb Q(w_p)$.
\end{prop}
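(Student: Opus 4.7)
The statement is the classical evaluation of the quadratic Gauss sum, so the plan is to prove it by the standard route of squaring the sum. Let $g = \sum_{j=0}^{p-1}\left(\frac{j}{p}\right)w_p^j$. I would show directly that $g^2 = p^*$, which immediately gives $g = \pm\sqrt{p^*}$ and hence $\sqrt{p^*} \in \mathbb Q(w_p)$ (the sign of the Gauss sum, which is Gauss's famously delicate result, is irrelevant for our statement).

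To compute $g^2$, I would write it as a double sum $\sum_{a,b=1}^{p-1}\left(\frac{ab}{p}\right)w_p^{a+b}$ (the $a=0$ and $b=0$ terms vanish because of the Legendre symbol). Then I would substitute $b = ac \bmod p$ with $c$ ranging over $1,\ldots,p-1$, using $\left(\frac{a^2 c}{p}\right)=\left(\frac{c}{p}\right)$, to obtain
\[
g^2 = \sum_{c=1}^{p-1}\left(\frac{c}{p}\right)\sum_{a=1}^{p-1}w_p^{a(1+c)}.
\]
The inner sum equals $p-1$ when $c \equiv -1 \pmod p$ and equals $-1$ otherwise, by the standard vanishing of a full sum of nontrivial $p$-th roots. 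After separating the $c \equiv -1$ term and using $\sum_{c=1}^{p-1}\left(\frac{c}{p}\right)=0$, the expression collapses to $\left(\frac{-1}{p}\right)p$, which is exactly $p^*$ by the definition of $p^*$ and Euler's criterion for $\left(\frac{-1}{p}\right)$.

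For the second assertion, suppose $p \equiv 3 \pmod 4$ and assume toward contradiction that $\sqrt{p} \in \mathbb Q(w_p)$. Then $\sqrt{p^*}=i\sqrt{p}$ is also in $\mathbb Q(w_p)$ by the first part, so $i = \sqrt{p^*}/\sqrt{p} \in \mathbb Q(w_p)$. On the other hand $i \in \mathbb Q(w_4)$, and $\gcd(p,4)=1$, so Proposition \ref{intersection} gives $\mathbb Q(w_p)\cap \mathbb Q(w_4) = \mathbb Q(w_1) = \mathbb Q$. This forces $i \in \mathbb Q$, a contradiction, completing the proof.

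The only delicate point is the index bookkeeping in the substitution $b \mapsto ac$ and being careful that $c=0$ contributes nothing (it does not, since $\left(\frac{0}{p}\right)=0$) and that the $c \equiv -1$ term is isolated correctly; everything else is routine manipulation of roots of unity and Legendre symbols, and the second part is a short application of Proposition \ref{intersection}.
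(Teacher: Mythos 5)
Your proof is correct, and for the first assertion it takes a genuinely different route from the paper: the paper simply cites Washington's book for the quadratic Gauss sum evaluation, whereas you prove it from scratch by squaring the sum, reindexing via $b=ac$, and collapsing the inner geometric sums to get $g^2=\left(\frac{-1}{p}\right)p=p^*$. This makes the paper self-contained at the cost of about a page of computation; the one caveat is that your argument yields $g=\pm\sqrt{p^*}$ but not the sign, so strictly speaking you prove slightly less than the stated identity $\sqrt{p^*}=\sum_j\left(\frac{j}{p}\right)w_p^j$. That weaker conclusion is in fact enough for everything the paper does later (Lemmas \ref{element-in-lemma}, \ref{auto+-p} and \ref{auto+-p-3(4)} only need the membership $\sqrt{p^*}\in\mathbb Q(w_p)$ and the Galois action, both of which are insensitive to an overall sign), but you should either note that the statement is being used only up to sign or invoke Gauss's sign determination to close the gap. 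Your proof of the second assertion (deducing $i\in\mathbb Q(w_p)$ and contradicting $\mathbb Q(w_4)\cap\mathbb Q(w_p)=\mathbb Q$ from Proposition \ref{intersection}) is exactly the paper's argument.
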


\begin{proof}
	The first statement is the quadratic Gauss sum result, see \cite{Cyc-LW} for example.
	
	Let $p\equiv 3 \mod 4$ and assume $\sqrt{p} \in \mathbb Q(w_p)$. Since $i\sqrt{p}$  is in $\mathbb Q(w_p)$, then $i \in \mathbb Q(w_p)$. On the other hand,  $\mathbb Q(w_4) \cap \mathbb Q(w_p)=\mathbb Q(w_2)=\mathbb Q$ by Proposition \ref{intersection} which contradicts  $i \in \mathbb Q(w_p)$.
\end{proof}

\begin{lemma}\label{element-in-lemma}
Let $p \equiv 1\mod 4$ be a prime and $n$ be a positive integer. Then the element $\sqrt{p}$ is in $\mathbb Q(w_n)$ if and only if $p\mid n$. 
\end{lemma}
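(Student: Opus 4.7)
The plan is to prove both directions using Propositions \ref{intersection} and \ref{gauss-sum}, which together do almost all of the work.

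For the ``if'' direction, I would assume $p \mid n$. Then $w_p \in \mathbb{Q}(w_n)$, hence $\mathbb{Q}(w_p) \subseteq \mathbb{Q}(w_n)$. Since $p \equiv 1 \pmod 4$, Proposition \ref{gauss-sum} gives $\sqrt{p^*} = \sqrt{p} \in \mathbb{Q}(w_p)$, which is therefore contained in $\mathbb{Q}(w_n)$. This direction is immediate.

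For the ``only if'' direction, I would argue by contradiction. Suppose $\sqrt{p} \in \mathbb{Q}(w_n)$ but $p \nmid n$. Since $p$ is prime, $\gcd(n,p) = 1$, so Proposition \ref{intersection} gives
\[
\mathbb{Q}(w_n) \cap \mathbb{Q}(w_p) \;=\; \mathbb{Q}(w_1) \;=\; \mathbb{Q}.
\]
On the other hand, $p \equiv 1 \pmod 4$ gives $\sqrt{p} \in \mathbb{Q}(w_p)$ by Proposition \ref{gauss-sum}, and by assumption $\sqrt{p} \in \mathbb{Q}(w_n)$. Hence $\sqrt{p}$ lies in the intersection, forcing $\sqrt{p} \in \mathbb{Q}$, which is absurd.

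There is no real obstacle here: both directions reduce to a one-line application of the two propositions already proved. The only subtlety worth mentioning is that the hypothesis $p \equiv 1 \pmod 4$ is essential to guarantee $\sqrt{p}$ (as opposed to $i\sqrt{p}$) lies in $\mathbb{Q}(w_p)$; the companion statement of Proposition \ref{gauss-sum} shows that for $p \equiv 3 \pmod 4$ we actually have $\sqrt{p} \notin \mathbb{Q}(w_p)$, so the present lemma is specifically tailored to the $p \equiv 1 \pmod 4$ case.
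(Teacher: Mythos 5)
Your proposal is correct and follows essentially the same route as the paper: the forward direction embeds the Gauss sum $\sqrt{p}\in\mathbb Q(w_p)\subseteq\mathbb Q(w_n)$, and the converse combines Proposition \ref{gauss-sum} with Proposition \ref{intersection} applied to $\gcd(n,p)=1$. The only cosmetic difference is that the paper rewrites the Gauss sum explicitly in powers of $w_n$ rather than invoking the containment $\mathbb Q(w_p)\subseteq\mathbb Q(w_n)$.
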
 

\begin{proof}
	If $p$ does not divide $n$, we have $(n,p)=1$ and $$\mathbb Q(w_n) \cap \mathbb Q(w_p) =\mathbb Q$$ by Proposition \ref{intersection}.  Moreover, we have $$\sqrt {p}=\sum_{j=0}^{p-1}\left(\frac{j}{p}\right)w_p^j \in \mathbb Q(w_p)$$ by Proposition \ref{gauss-sum}. Therefore $$\sqrt{p} \not \in \mathbb Q(w_n).$$  On the other hand, assume $p\mid n$. Then we have $$\sqrt {p}=\sum_{j=0}^{p-1}\left(\frac{j}{p}\right)w_p^j=\sum_{j=0}^{p-1}\left(\frac{j}{p}\right)w_n^{jn/p} \in \mathbb Q(w_n)$$  by Proposition \ref{gauss-sum}. 
\end{proof}

\begin{lemma}\label{element-in-lemma-3(4)}
	Let $p\equiv 3 \mod 4$ be a prime and $n$ be a positive integer. Then the element $\sqrt{p}$ is in $\mathbb Q(w_n)$ if and only if $4p\mid n$. 
\end{lemma}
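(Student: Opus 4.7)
The plan is to mirror the proof of Lemma \ref{element-in-lemma}, but now we must account for the factor of $i$ coming from $\sqrt{p^*}=i\sqrt{p}$ when $p\equiv 3\bmod 4$.

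First I would handle the easy direction. Since $p\equiv 3\bmod 4$, Proposition \ref{gauss-sum} gives $i\sqrt{p}=\sqrt{p^*}\in\mathbb{Q}(w_p)\subseteq\mathbb{Q}(w_{4p})$, and $i=w_4\in\mathbb{Q}(w_{4p})$, so $\sqrt{p}\in\mathbb{Q}(w_{4p})$. Hence if $4p\mid n$ then $\mathbb{Q}(w_{4p})\subseteq\mathbb{Q}(w_n)$ and $\sqrt{p}\in\mathbb{Q}(w_n)$.

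For the converse, suppose $\sqrt{p}\in\mathbb{Q}(w_n)$. Combined with $\sqrt{p}\in\mathbb{Q}(w_{4p})$, I get $\sqrt{p}\in\mathbb{Q}(w_n)\cap\mathbb{Q}(w_{4p})$, which by Proposition \ref{intersection} equals $\mathbb{Q}(w_d)$ where $d=\gcd(n,4p)$. Since $d\mid 4p$ and $p$ is an odd prime, the divisors of $4p$ are $d\in\{1,2,4,p,2p,4p\}$. The goal is to exclude every value except $d=4p$, because that forces $4p\mid n$.

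The case analysis is the crux. For $d\in\{1,2\}$ we have $\mathbb{Q}(w_d)=\mathbb{Q}$ and $\sqrt{p}\notin\mathbb{Q}$. For $d=4$ we have $\mathbb{Q}(w_4)=\mathbb{Q}(i)$, which contains no real irrationals, so $\sqrt{p}\notin\mathbb{Q}(i)$. For $d=p$ Proposition \ref{gauss-sum} says directly $\sqrt{p}\notin\mathbb{Q}(w_p)$. For $d=2p$, use that $\mathbb{Q}(w_{2p})=\mathbb{Q}(w_p)$ (since $-w_{2p}$ is a primitive $p$-th root of unity, or equivalently $[\mathbb{Q}(w_{2p}):\mathbb{Q}]=\phi(2p)=p-1=\phi(p)$), and we already excluded $\mathbb{Q}(w_p)$. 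The only remaining option is $d=4p$, giving $4p\mid n$.

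The main obstacle, such as it is, lies in the two middle cases $d=4$ and $d=2p$: one needs the complex-conjugation/reality observation for $\mathbb{Q}(i)$ and the basic identification $\mathbb{Q}(w_{2p})=\mathbb{Q}(w_p)$ for odd $p$. Everything else is a direct reduction to Propositions \ref{intersection} and \ref{gauss-sum}.
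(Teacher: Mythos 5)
Your proof is correct and follows essentially the same route as the paper: the forward direction via $\sqrt{p}=-i\cdot i\sqrt{p}\in\mathbb{Q}(w_{4p})$, and the converse via Proposition \ref{intersection} with the case analysis on $\gcd(n,4p)\in\{1,2,4,p,2p\}$, using $\sqrt{p}\notin\mathbb{Q}(w_p)$ and $\sqrt{p}\notin\mathbb{Q}(i)$. The only cosmetic difference is that the paper groups the cases $1,2,p,2p$ together by noting all four intersections land inside $\mathbb{Q}(w_p)$, whereas you treat them individually.
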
 

\begin{proof}
	 Assume $4p\mid n$. Since $-i$ and $i\sqrt{p}$ are in $\mathbb Q(w_n)$, we have $\sqrt p= -i\cdot i\sqrt{p}\in \mathbb Q(w_n)$.
	   
	If $4p$ does not divide $n$, then $(n,4p)$ is $1,\ 2,\ 4,\ p$ or $2p$. If $(n,4p)$ is $1,\ 2,\ p$ or $2p$, then $$\mathbb Q(w_n) \cap \mathbb Q(w_{4p}) =\mathbb Q(w_{(n,4p)}) \subseteq \mathbb Q(w_{2p})=\mathbb Q(w_p) $$ by Proposition \ref{intersection}. Since $\sqrt{p}\in \mathbb Q(w_{4p})$ and $\sqrt p \not \in \mathbb Q(w_p)$  by Proposition \ref{gauss-sum}, we have $\sqrt p \not \in \mathbb Q(w_n)$.   If $(n,4p)=4$, then $$\mathbb Q(w_n) \cap \mathbb Q(w_{4p}) =\mathbb Q(w_4)=\mathbb Q[i].$$ So by Proposition \ref{intersection}. Since $\sqrt{p}\in \mathbb Q(w_{4p})$ and $\sqrt p \not \in \mathbb Q[i]$, we have $\sqrt p \not \in \mathbb Q(w_n)$. 
\end{proof}

\begin{lemma}
	Let $n$ be a positive integer. The element $\sqrt 2$ is in $\mathbb Q(w_n)$ if and only if $8\mid n$. 
\end{lemma}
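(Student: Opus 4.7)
The plan is to follow the template set by Lemmas~\ref{element-in-lemma} and \ref{element-in-lemma-3(4)}: first exhibit $\sqrt{2}$ explicitly inside $\mathbb{Q}(w_8)$, then use Proposition~\ref{intersection} to intersect $\mathbb{Q}(w_n)$ with $\mathbb{Q}(w_8)$ and rule out the proper subfields.

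For the ``if'' direction, assume $8 \mid n$. Then $w_8 \in \mathbb{Q}(w_n)$, so it suffices to show $\sqrt 2 \in \mathbb{Q}(w_8)$. This follows from the direct computation $w_8 + w_8^{-1} = 2\cos(\pi/4) = \sqrt 2$ (equivalently, $(w_8 + w_8^{-1})^2 = w_8^2 + 2 + w_8^{-2} = i + 2 - i = 2$, using $w_8^2 = i$). Hence $\sqrt 2 \in \mathbb{Q}(w_8) \subseteq \mathbb{Q}(w_n)$.

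For the converse, assume $8 \nmid n$ and let $d = \gcd(n,8) \in \{1,2,4\}$. By Proposition~\ref{intersection} we have
\[
\mathbb{Q}(w_n) \cap \mathbb{Q}(w_8) = \mathbb{Q}(w_d).
\]
Since $\sqrt 2 \in \mathbb{Q}(w_8)$ by the computation above, if $\sqrt 2 \in \mathbb{Q}(w_n)$ then $\sqrt 2 \in \mathbb{Q}(w_d)$. I would finish by ruling out each case: $\mathbb{Q}(w_1) = \mathbb{Q}(w_2) = \mathbb{Q}$ does not contain the irrational $\sqrt 2$, and $\mathbb{Q}(w_4) = \mathbb{Q}(i)$ does not contain $\sqrt 2$ either, since $\mathbb{Q}(i)$ is an imaginary quadratic field whose unique real subfield is $\mathbb{Q}$ (so $\sqrt 2 \in \mathbb{Q}(i)$ would force $\sqrt 2 \in \mathbb{Q}$).

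The only mildly delicate step is verifying $\sqrt 2 \in \mathbb{Q}(w_8)$, since unlike the odd prime case we cannot cite Proposition~\ref{gauss-sum} directly; but the explicit identity $(w_8 + w_8^{-1})^2 = 2$ handles it in one line. Everything else is a mechanical case check of the three possible values of $\gcd(n,8)$, exactly parallel to the proof of Lemma~\ref{element-in-lemma-3(4)}.
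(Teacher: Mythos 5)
Your proof is correct and follows essentially the same route as the paper: exhibit $\sqrt 2$ explicitly in $\mathbb Q(w_8)$ (your $w_8+w_8^{-1}$ equals the paper's $w_8-w_8^3$ since $w_8^4=-1$), then use Proposition~\ref{intersection} to reduce the converse to $\sqrt 2\not\in\mathbb Q(w_4)=\mathbb Q(i)$. The only difference is that you spell out why $\sqrt 2\not\in\mathbb Q(i)$, which the paper simply asserts.
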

\begin{proof}
	Assume $8$ divides $n$ and write $n=8t$ where $t$ is a positive integer. Then we have $$\sqrt {2}=w_8-w_8^3=w_n^{t}-w_n^{3t}\in \mathbb Q(w_n).$$ On the other hand, assume $8$ does not divide $n$. Then $$\mathbb Q(w_n)\cap \mathbb Q(w_8)  \subseteq \mathbb Q(w_4).$$ Since $\sqrt{2} \in \mathbb Q(w_8)$ and $\sqrt2 \not \in \mathbb Q(w_4)=\mathbb Q[i]$, we have $\sqrt2\not\in\mathbb Q(w_n)$. 
\end{proof}
\begin{lemma}
	Let $p$ be a prime and $n$ be a positive integer such that $\sqrt{p}\in \mathbb Q(w_n)$. Then the extension degree $\left[\mathbb Q(w_n) :\mathbb Q(\sqrt{p}) \right]$ is $\frac{\phi(n)}{2}$.
\end{lemma}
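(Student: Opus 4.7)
The plan is essentially a one-line application of the tower law, since the hypothesis places $\mathbb{Q}(\sqrt{p})$ inside $\mathbb{Q}(w_n)$ and the cyclotomic degree is already known.

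First I would record the chain $\mathbb{Q}\subseteq \mathbb{Q}(\sqrt{p})\subseteq \mathbb{Q}(w_n)$, which is legitimate because the containment on the right is exactly the assumption. Since $p$ is a prime (in particular $p>1$ and squarefree), $\sqrt{p}$ is irrational, so the minimal polynomial of $\sqrt{p}$ over $\mathbb{Q}$ is $X^2-p$ and $[\mathbb{Q}(\sqrt{p}):\mathbb{Q}]=2$. This is the only ``content'' in the argument, and it is immediate.

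Next I would invoke the well-known fact that $[\mathbb{Q}(w_n):\mathbb{Q}]=\phi(n)$, together with multiplicativity of degrees in a tower, to write
\[
\phi(n)=[\mathbb{Q}(w_n):\mathbb{Q}]=[\mathbb{Q}(w_n):\mathbb{Q}(\sqrt{p})]\cdot[\mathbb{Q}(\sqrt{p}):\mathbb{Q}]=2\cdot[\mathbb{Q}(w_n):\mathbb{Q}(\sqrt{p})].
\]
Dividing by $2$ gives the claimed value $\phi(n)/2$.

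There is no real obstacle here; the hypothesis does all the work by ensuring the intermediate field actually sits inside the cyclotomic field, so that the tower law is applicable. The only thing one must not forget is that the statement is uniform in $p$: it covers $p=2$ as well (where the preceding lemma forces $8\mid n$) and both residue classes of odd $p$ (handled by the previous two lemmas), but in all cases the argument above is identical because it uses nothing beyond the irrationality of $\sqrt{p}$.
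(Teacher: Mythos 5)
Your proof is correct and is essentially identical to the paper's: both use the tower $\mathbb{Q}\subseteq\mathbb{Q}(\sqrt{p})\subseteq\mathbb{Q}(w_n)$ together with $[\mathbb{Q}(\sqrt{p}):\mathbb{Q}]=2$ and $[\mathbb{Q}(w_n):\mathbb{Q}]=\phi(n)$ and then divide. No differences worth noting.
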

\begin{proof}
	Since $\sqrt{p} \in \mathbb Q(w_n)$, we have $$\mathbb Q\subset \mathbb Q (\sqrt{p}) \subset \mathbb Q(w_n).$$ Hence the result follows by the equality \begin{align*}
	\phi(n)&=\left[ \mathbb Q(w_n): \mathbb Q\right]\\[10pt]&=\left[ \mathbb Q(w_n): \mathbb Q(\sqrt{p})\right]\cdot \left[ \mathbb Q(\sqrt{p}): \mathbb Q\right]\\[10pt]&=2\left[ \mathbb Q(w_n): \mathbb Q(\sqrt{p})\right].
	\end{align*}
\end{proof}

 Let $p$ be an odd prime and $n$ be an integer such that $\sqrt p \in \mathbb Q(w_n)$. Define the index sets $I_n^+$ and $I_n^-$ as follows 
 \[
 I_n^+=\left\{ k \: | \: (k,n)=1, \; \left(\frac{(-1)^{(k-1)/2}k}{p}\right)=1 \text{ and } 1 \le k \le n  \right\}
 \]
 and 
 \[
 I_n^-=\left\{ k \: | \: (k,n)=1, \; \left(\frac{(-1)^{(k-1)/2}k}{p}\right)=-1 \text{ and } 1 \le k \le n  \right\}.
 \]
 
In same manner, let $p=2$ and  $n$ be an integer divisible by $8$. Define the index sets $I_n^+$ and $I_n^-$ as follows $$I_n^+=\{ k \: | \: (k,n)=1, \; k\equiv \pm 1 \mod 8 \text{ and } 1 \le k \le n  \}$$and$$ I_n^-=\{ k \: | \: (k,n)=1, \;k\equiv \pm 3 \mod 8  \text{ and } 1 \le k \le n  \}.$$ Moreover, define $I_n=I_n^+\cup I_n^-$. Define the polynomials $\Phi_n^{+}$ and $\Phi_n^{-}$ as follows $$\Phi_n^{+}(x)=\prod_{j\in I_n^+}(x-w_n^j) \;\;\; \text{ and } \;\;\;
 \Phi_n^{-}(x)=\prod_{j\in I_n^-}(x-w_n^j).$$

Define $G_n$ to be the Galois group $ Gal(\mathbb Q(w_n)/\mathbb Q).$
 Define  the group $G_n^+$ and the set $G_n^{-}$ as follows $$G_n^+:=\{\sigma \in G_n | \sigma(w_n)=w_n^k \text{ where } k \in I_n^+\}$$ and $$ G_n^-:=\{\sigma \in G_n | \sigma(w_n)=w_n^k \text{ where } k\in I_n^-\}.$$

 The following lemmas show that $G_n^+$ is a subgroup of the Galois group $G_n$ of index $2$ and shows that the subset $G_n^{-}$ is the relative coset of $G_n^+$ inside $G_n$.
 
 \begin{lemma}\label{auto+-p}
	Let $p \equiv 1 \mod 4$ be a prime and  $n$ be a positive integer divisible by $p$.  Then the group  $G_n^+$ fixes $\sqrt{p}$ and $G_n^{-}$ takes $\sqrt{p}$ to $-\sqrt{p}$.
 \end{lemma}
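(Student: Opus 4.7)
My plan is to explicitly compute the action of an arbitrary $\sigma \in G_n$ on $\sqrt{p}$, using the Gauss sum representation from Proposition \ref{gauss-sum} together with the hypothesis $p \mid n$ to transport the formula from $\mathbb{Q}(w_p)$ into $\mathbb{Q}(w_n)$. Specifically, since $p\equiv 1\bmod 4$ we have $p^* = p$, and $p\mid n$ lets me write $w_p = w_n^{n/p}$. Proposition \ref{gauss-sum} then gives
\[
\sqrt{p}=\sum_{j=0}^{p-1}\left(\frac{j}{p}\right)w_n^{jn/p}.
\]

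For any $\sigma\in G_n$, write $\sigma(w_n)=w_n^k$ with $(k,n)=1$; since $p\mid n$ we have $(k,p)=1$ as well. Applying $\sigma$ term-by-term to the displayed sum and then substituting $j\mapsto k^{-1}j\pmod{p}$ (which permutes the index set $\{0,1,\dots,p-1\}$ because $k$ is invertible mod $p$) converts the exponent back to $jn/p$ at the cost of multiplying every Legendre symbol by $\left(\frac{k^{-1}}{p}\right)=\left(\frac{k}{p}\right)$. This yields the key identity
\[
\sigma(\sqrt{p})=\left(\frac{k}{p}\right)\sqrt{p}.
\]

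The last step is to reconcile the symbol $\left(\frac{k}{p}\right)$ with the symbol $\left(\frac{(-1)^{(k-1)/2}k}{p}\right)$ appearing in the definition of $I_n^{\pm}$. Because $p\equiv 1\bmod 4$, we have $\left(\frac{-1}{p}\right)=1$, so $\left(\frac{(-1)^{(k-1)/2}k}{p}\right)=\left(\frac{k}{p}\right)$ for every odd $k$. Hence $\sigma$ fixes $\sqrt{p}$ precisely when $k\in I_n^+$ and sends $\sqrt{p}\mapsto -\sqrt{p}$ precisely when $k\in I_n^-$, giving both assertions of the lemma.

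There is no serious obstacle here; the proof is essentially a Galois-theoretic recasting of the standard Gauss sum computation. The only point that needs mild care is the index substitution: one must verify that $k$ is a unit mod $p$ (which follows from $p\mid n$ and $(k,n)=1$) so that $j\mapsto k^{-1}j$ genuinely permutes residues mod $p$, and that the exponent $w_n^{k^{-1}j\cdot n/p}$ equals $w_n^{jn/p}$ up to the multiplicative character factor. Once that substitution is justified, the rest is immediate.
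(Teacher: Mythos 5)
Your proof is correct and follows essentially the same route as the paper: both apply $\sigma$ to the Gauss sum $\sqrt{p}=\sum_{j}\left(\frac{j}{p}\right)w_n^{jn/p}$, reindex the sum using the invertibility of $k$ mod $p$ to extract the factor $\left(\frac{k}{p}\right)$, and then use $\left(\frac{-1}{p}\right)=1$ to identify $\left(\frac{k}{p}\right)$ with the symbol defining $I_n^{\pm}$. The only cosmetic difference is that you treat both cases in one uniform computation $\sigma(\sqrt{p})=\left(\frac{k}{p}\right)\sqrt{p}$, whereas the paper splits into the cases $\left(\frac{k}{p}\right)=\pm1$.
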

 \begin{proof} 
For a positive integer $k$ we have $\left(\frac{(-1)^{(k-1)/2}k}{p}\right)=\left(\frac{k}{p}\right)$ since $\left(\frac{-1}{p}\right)=1$.

Let $\sigma \in G_n^+$. Then there exists an integer  $k$ with $(k,n)=1$, $\left(\frac{k}{p}\right)=1$ and $1\le k \le n$ and $\sigma(w_n)=w_n^k$. Then $$\sigma(\sqrt{p})=\sigma\left(\sum_{j=0}^{p-1} \left(\frac{j}{p}\right)w_n^{jn/p}\right)=\sum_{j=0}^{p-1} \left(\frac{j}{p}\right)w_n^{kjn/p}=\sum_{j=0}^{p-1} \left(\frac{kj}{p}\right)w_n^{kjn/p}
$$
$$=\sum_{j=0}^{p-1} \left(\frac{j}{p}\right)w_n^{jn/p}=\sqrt{p}.$$
Let $\sigma \in G_n^-$. Then there exists an integer  $k$ with $(k,n)=1$, $\left(\frac{k}{n}\right)=-1$ and $1\le k \le n$ and $\sigma(w_n)=w_n^k$. Then $$\sigma(\sqrt{p})=\sigma\left(\sum_{j=0}^{p-1} \left(\frac{j}{p}\right)w_n^{jn/p}\right)=\sum_{j=0}^{p-1} \left(\frac{j}{p}\right)w_n^{kjn/p}=\sum_{j=0}^{p-1} -\left(\frac{kj}{p}\right)w_n^{kjn/p}
$$
$$=-\sum_{j=0}^{p-1} \left(\frac{j}{p}\right)w_n^{jn/p}=-\sqrt{p}.$$
\end{proof}

 \begin{lemma}\label{auto+-p-3(4)}
	Let $p \equiv 3 \mod 4$ be a prime and  $n$ be a positive integer divisible by $4p$.  Then the group  $G_n^+$ fixes $\sqrt{p}$ and $G_n^{-}$ takes $\sqrt{p}$ to $-\sqrt{p}$.
\end{lemma}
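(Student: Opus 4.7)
The plan is to reduce this case to the Gauss sum expression from Proposition \ref{gauss-sum} by writing $\sqrt{p}$ as $-i \cdot (i\sqrt{p})$ and computing how each Galois element acts on the two factors separately.

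First, since $4p \mid n$, both $i = w_n^{n/4}$ and $w_p = w_n^{n/p}$ lie in $\mathbb{Q}(w_n)$. By Proposition \ref{gauss-sum}, for $p \equiv 3 \mod 4$ we have
\[
i\sqrt{p} = \sqrt{p^*} = \sum_{j=0}^{p-1}\left(\frac{j}{p}\right) w_p^j,
\]
so $\sqrt{p} = -i \cdot \sum_{j=0}^{p-1}\left(\frac{j}{p}\right) w_p^j$ is an explicit element of $\mathbb{Q}(w_n)$. Fix $\sigma \in G_n$ with $\sigma(w_n) = w_n^k$, where $(k,n)=1$; since $2 \mid n$, the integer $k$ is odd.

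Next I would compute $\sigma(i\sqrt{p})$ and $\sigma(i)$ in turn. Exactly as in the proof of Lemma \ref{auto+-p}, the substitution $j \mapsto kj$ in the Gauss sum gives
\[
\sigma(i\sqrt{p}) = \sum_{j=0}^{p-1}\left(\frac{j}{p}\right) w_p^{kj} = \left(\frac{k}{p}\right) i\sqrt{p}.
\]
For $\sigma(i) = i^k$, using that $k$ is odd one checks $i^k = (-1)^{(k-1)/2} i$, and a short case check gives $-i^{k+1} = (-1)^{(k-1)/2}$. Multiplying,
\[
\sigma(\sqrt{p}) = \sigma(-i)\,\sigma(i\sqrt{p}) = -i^{k+1}\left(\frac{k}{p}\right)\sqrt{p} = (-1)^{(k-1)/2}\left(\frac{k}{p}\right)\sqrt{p}.
\]

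Finally, because $p \equiv 3 \mod 4$ we have $\left(\frac{-1}{p}\right) = -1$, so $(-1)^{(k-1)/2} = \left(\frac{(-1)^{(k-1)/2}}{p}\right)$ and therefore
\[
\sigma(\sqrt{p}) = \left(\frac{(-1)^{(k-1)/2} k}{p}\right)\sqrt{p}.
\]
By the definitions of $I_n^+$ and $I_n^-$, this quantity is $+\sqrt{p}$ when $\sigma \in G_n^+$ and $-\sqrt{p}$ when $\sigma \in G_n^-$, which is the claim. The only real subtlety is the sign bookkeeping in step three, specifically verifying $-i^{k+1} = (-1)^{(k-1)/2}$ for odd $k$ and then absorbing the factor $(-1)^{(k-1)/2}$ into the Legendre symbol using $\left(\frac{-1}{p}\right) = -1$; once this is in place, the argument parallels the $p \equiv 1 \mod 4$ case.
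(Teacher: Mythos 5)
Your proof is correct and follows essentially the same route as the paper's: factor $\sqrt{p}=-i\cdot i\sqrt{p}$, act on the Gauss sum to pick up $\left(\frac{k}{p}\right)$, act on $i$ to pick up $-i^{k+1}=(-1)^{(k-1)/2}$, and absorb the sign into the Legendre symbol via $\left(\frac{-1}{p}\right)=-1$. The only cosmetic difference is that the paper splits into the two cases $\left(\frac{k}{p}\right)=\pm 1$ while you handle both at once by multiplicativity.
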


\begin{proof}

Let $\sigma \in G_n$. Then there exists an integer  $k$ with  $(k,n)=1$ and $1\le k \le n$ and $\sigma(w_n)=w_n^k$.

Since $n$ is even, $k$ is odd. Since $n$ is divisible by $p$, $k$ is not divisible by $p$. 

Moreover, we have $$\sigma(-i)=-\sigma(i)=-\sigma(w_{n}^{n/4})=-\sigma(w_n)^{n/4}=-(w_n^k)^{n/4}=-(w_n^{n/4})^k=-i^k.$$

If $\left(\frac kp\right)=1$, we have
 $$\sigma(\sqrt{p})=\sigma(-i\cdot i\sqrt{p})=\sigma(-i)\cdot \sigma\left(\sum_{j=0}^{p-1} \left(\frac{j}{p}\right)w_n^{jn/p}\right)=-i^k\sum_{j=0}^{p-1} \left(\frac{j}{p}\right)w_n^{kjn/p}$$
 $$=-i^k\sum_{j=0}^{p-1} \left(\frac{kj}{p}\right)w_n^{kjn/p}
	=-i^k\sum_{j=0}^{p-1} \left(\frac{j}{p}\right)w_n^{jn/p}=-i^k\cdot i\sqrt{p}=-i^{k+1}\sqrt{p}=(-1)^{(k-1)/2}\sqrt{p}$$ and if $\left(\frac kp\right)=-1$, we have
	$$\sigma(\sqrt{p})=\sigma(-i\cdot i\sqrt{p})=\sigma(-i)\cdot \sigma\left(\sum_{j=0}^{p-1} \left(\frac{j}{p}\right)w_n^{jn/p}\right)=-i^k\sum_{j=0}^{p-1} \left(\frac{j}{p}\right)w_n^{kjn/p}$$
	$$=-i^k\sum_{j=0}^{p-1} -\left(\frac{kj}{p}\right)w_n^{kjn/p}
	=i^k\sum_{j=0}^{p-1} \left(\frac{j}{p}\right)w_n^{jn/p}=-i^k\cdot i\sqrt{p}=i^{k+1}\sqrt{p}=-(-1)^{(k-1)/2}\sqrt{p}.$$ Now the result follows by the fact $\left(\frac{-1}{p}\right)=-1$. 
\end{proof}

 \begin{lemma}\label{auto+-2}
	Let $p=2$  and  $n$ be a positive integer divisible by $8$.  Then the group  $G_n^+$ fixes $\sqrt{2}$ and $G_n^{-}$ takes $\sqrt{2}$ to $-\sqrt{2}$.
\end{lemma}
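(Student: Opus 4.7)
The plan is to use the explicit expression for $\sqrt{2}$ already obtained in the previous lemma, namely that when $n=8t$ we have
\[
\sqrt{2} \;=\; w_8 - w_8^3 \;=\; w_n^{t} - w_n^{3t},
\]
and then simply evaluate a general automorphism $\sigma\in G_n$ on this element. This is analogous in spirit to Lemma \ref{auto+-p} and Lemma \ref{auto+-p-3(4)}, but it is computationally cleaner because $\sqrt{2}$ has a short two-term expression in $w_8$, so no Gauss sum manipulation is needed—only the classification of $k\bmod 8$.

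First I would take $\sigma\in G_n$ arbitrary, so $\sigma(w_n)=w_n^k$ with $(k,n)=1$; because $8\mid n$, this forces $k$ to be odd and $(k,8)=1$, so $k\equiv 1,3,5,7 \pmod 8$. Next I would compute
\[
\sigma(\sqrt{2}) \;=\; w_n^{kt} - w_n^{3kt} \;=\; w_8^{k} - w_8^{3k},
\]
which depends only on $k\bmod 8$. I would then check the four residue classes directly: for $k\equiv 1$, $\sigma(\sqrt 2)=w_8-w_8^3=\sqrt 2$; for $k\equiv 7$, using $w_8^7=-w_8^3$ and $w_8^{21}=-w_8$, we again get $\sqrt 2$; for $k\equiv 3$, $\sigma(\sqrt 2)=w_8^3-w_8=-\sqrt 2$; and for $k\equiv 5$, using $w_8^5=-w_8$ and $w_8^{15}=-w_8^3$, we again get $-\sqrt 2$.

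This matches exactly the definitions of $I_n^+=\{k:(k,n)=1,\,k\equiv\pm 1\bmod 8\}$ and $I_n^-=\{k:(k,n)=1,\,k\equiv\pm 3\bmod 8\}$, so $\sigma\in G_n^+$ iff $\sigma(\sqrt 2)=\sqrt 2$, and $\sigma\in G_n^-$ iff $\sigma(\sqrt 2)=-\sqrt 2$. There is essentially no obstacle here; the only thing that might look subtle is recognising that the entire computation collapses to arithmetic modulo $8$ because the exponents $t$ and $3t$ become $1$ and $3$ after multiplication by $n/8=t^{-1}\cdot n/8$—but once one writes $n=8t$ the reduction is immediate. The main point is simply that the definition of $I_n^\pm$ for $p=2$ has been set up so that this case-check yields precisely the claimed dichotomy.
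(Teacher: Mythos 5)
Your proposal is correct and follows essentially the same route as the paper: apply an arbitrary $\sigma$ with $\sigma(w_n)=w_n^k$ to the expression $\sqrt2=w_8-w_8^3=w_n^t-w_n^{3t}$ and check the four residue classes of $k$ modulo $8$, which is exactly the paper's case analysis. The only cosmetic difference is that you spell out the identities $w_8^5=-w_8$, $w_8^7=-w_8^3$ where the paper simply asserts $w_8^7-w_8^5=\sqrt2$.
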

\begin{proof}
	Let $\sigma \in G_n^+$. Then there exists an integer  $k$ with $(k,n)=1$, $k\equiv \pm 1\mod 8$ and $1\le k \le n$ and $\sigma(w_n)=w_n^k$. Then $$\sigma(\sqrt{2})=\sigma\left(w_8-w_8^3\right)=w_8^k-w_8^{3k}=\begin{cases}
	w_8-w_8^3 &\text{ if } k \equiv 1 \mod 8 \\
		w_8^7-w_8^5 &\text{ if } k \equiv 7 \mod 8 
	\end{cases}=\sqrt2.$$
	Let $\sigma \in G_n^-$. Then there exists an integer  $k$ with $(k,n)=1$, $k\equiv \pm 3\mod 8$ and $1\le k \le n$ and $\sigma(w_n)=w_n^k$. Then $$\sigma(\sqrt{2})=\sigma\left(w_8-w_8^3\right)=w_8^k-w_8^{3k}=\begin{cases}
	w_8^3-w_8 &\text{ if } k \equiv 3 \mod 8 \\
	w_8^5-w_8^7 &\text{ if } k \equiv 5 \mod 8 
	\end{cases}=-\sqrt2.$$
\end{proof}

\begin{cor}\label{irred+-}
		Let $p$ be a prime and $n$ be a positive integer such that $\sqrt p \in \mathbb Q(w_n)$. The polynomials $\Phi_n^+$ and $\Phi_n^-$ are irreducible over $\mathbb Q(\sqrt{p})$.
\end{cor}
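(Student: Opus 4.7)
The plan is to recognize $G_n^+$ as the Galois group $\mathrm{Gal}(\mathbb Q(w_n)/\mathbb Q(\sqrt p))$ and then identify the factorization $\Phi_n = \Phi_n^+\Phi_n^-$ with the decomposition of the set of primitive $n$-th roots of unity into $G_n^+$-orbits. The irreducibility of each factor will then follow by matching degrees with the minimal polynomial of any chosen root over $\mathbb Q(\sqrt p)$.

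The first step is to upgrade the preceding lemmas (\ref{auto+-p}, \ref{auto+-p-3(4)}, \ref{auto+-2}) from one-sided statements about the action of $G_n^+$ and $G_n^-$ on $\sqrt p$ into an identification. In every case one checks that $I_n = I_n^+ \sqcup I_n^-$ (each $k$ coprime to $n$ satisfies exactly one of the defining Legendre/congruence conditions, using $p\mid n$, resp.\ $4p\mid n$, resp.\ $8\mid n$), so $G_n = G_n^+ \sqcup G_n^-$. Combined with the fact that every element of $G_n^-$ moves $\sqrt p$, this forces $G_n^+$ to be exactly the subgroup fixing $\sqrt p$, i.e.\ $G_n^+ = \mathrm{Gal}(\mathbb Q(w_n)/\mathbb Q(\sqrt p))$, a subgroup of $G_n$ of index $2$ with $G_n^-$ as its non-trivial coset. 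The order $|G_n^+| = \phi(n)/2$ then agrees with the degree computation from the earlier lemma $[\mathbb Q(w_n):\mathbb Q(\sqrt p)] = \phi(n)/2$, and translating through the standard isomorphism $G_n \cong (\mathbb Z/n\mathbb Z)^*$ identifies $I_n^+$ with a subgroup of index $2$ and $I_n^-$ with its non-trivial coset.

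For the irreducibility itself, I would pick $j \in I_n^+$. Because $\gcd(j,n) = 1$, we have $\mathbb Q(\sqrt p)(w_n^j) = \mathbb Q(w_n)$, so the minimal polynomial of $w_n^j$ over $\mathbb Q(\sqrt p)$ has degree $\phi(n)/2$. Its roots are the $G_n^+$-orbit of $w_n^j$, namely $\{w_n^{kj} : k \in I_n^+\}$; since $I_n^+$ is a subgroup containing $j$, this set equals $\{w_n^{j'} : j' \in I_n^+\}$, i.e.\ the complete root set of $\Phi_n^+$. Hence the minimal polynomial divides $\Phi_n^+$, and since both are monic of degree $\phi(n)/2$ they coincide, so $\Phi_n^+$ is irreducible over $\mathbb Q(\sqrt p)$. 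The identical argument applied to $j \in I_n^-$ yields the $G_n^+$-orbit $I_n^+ \cdot j = I_n^-$ (the other coset), so $\Phi_n^-$ is the minimal polynomial of any such $w_n^j$ and is also irreducible.

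The only real obstacle is the first step: verifying that the three previous lemmas collectively cover all cases of $(p,n)$ and that the defining conditions of $I_n^{\pm}$ really partition $I_n$ into two halves of equal size $\phi(n)/2$, so that $G_n^+$ has the correct order to coincide with $\mathrm{Gal}(\mathbb Q(w_n)/\mathbb Q(\sqrt p))$. Once that bookkeeping is in place, the remainder is the standard orbit/minimal-polynomial argument and is routine.
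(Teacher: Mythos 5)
Your proposal is correct and follows the same route as the paper: the paper's proof simply observes that Lemmas \ref{auto+-p}, \ref{auto+-p-3(4)} and \ref{auto+-2} identify $G_n^+$ with $\mathrm{Gal}(\mathbb Q(w_n)/\mathbb Q(\sqrt p))$ and then asserts that irreducibility follows. You have filled in the two pieces the paper leaves implicit --- that $I_n^{\pm}$ partition $I_n$ into a subgroup and its nontrivial coset, and the orbit/minimal-polynomial degree count --- so your write-up is a correct, more detailed version of the same argument.
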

\begin{proof}
	By Lemma \ref{auto+-p}, \ref{auto+-p-3(4)} and \ref{auto+-2}, the Galois group $Gal(\mathbb Q(w_n)/\mathbb Q(\sqrt{p}))$ is $G_n^+$. The result follows by this fact. 
\end{proof}

\begin{cor}\label{sqrt-+cor}
	Let $p$ be an odd prime and $n$ be a positive integer such that $\sqrt p \in \mathbb Q(w_n)$. Let $\ell$ be an integer. 
	There exist rational numbers $a$ and $b$ such that $$  \sum_{j\in I_n^+} w_n^{j\ell}=a+b\sqrt{p} \;\;\; \text{ and } \;\;\; \sum_{j\in I_n^-} w_n^{j\ell}=a-b\sqrt{p}.$$
\end{cor}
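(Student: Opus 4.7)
The plan is to show both sums lie in $\mathbb Q(\sqrt p)$ and are exchanged by any Galois element that sends $\sqrt p \mapsto -\sqrt p$. Write $S^{+}=\sum_{j\in I_n^+} w_n^{j\ell}$ and $S^{-}=\sum_{j\in I_n^-} w_n^{j\ell}$.

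The first step is to observe that $I_n^+$ is a subgroup of $(\mathbb Z/n)^{*}$, with $I_n^-$ as its nontrivial coset. This is essentially recorded by the preceding material: under the usual isomorphism $G_n\cong(\mathbb Z/n)^{*}$, the set $G_n^+$ corresponds to $I_n^+$, and by Lemmas \ref{auto+-p}, \ref{auto+-p-3(4)} together with Corollary \ref{irred+-}, $G_n^+$ is precisely $\mathrm{Gal}(\mathbb Q(w_n)/\mathbb Q(\sqrt p))$, an index-$2$ subgroup of $G_n$. Equivalently one may check directly that $k\mapsto \left(\frac{(-1)^{(k-1)/2}k}{p}\right)$ is multiplicative on odd integers coprime to $p$, using multiplicativity of the Legendre symbol and the fact that $(-1)^{(k-1)/2}$ depends only on $k\bmod 4$.

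Given that $I_n^+$ is a subgroup, any $\sigma\in G_n^+$ with $\sigma(w_n)=w_n^k$, $k\in I_n^+$, satisfies $k\cdot I_n^+ = I_n^+$ (as sets mod $n$), so $\sigma(S^+)=S^+$, and likewise $\sigma(S^-)=S^-$. Since the fixed field of $G_n^+$ is $\mathbb Q(\sqrt p)$, both $S^+$ and $S^-$ lie in $\mathbb Q(\sqrt p)$, and I can write
\[
S^+ = a + b\sqrt p, \qquad S^- = c + d\sqrt p
\]
for some $a,b,c,d\in\mathbb Q$.

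To finish, pick any $\tau\in G_n^-$, say $\tau(w_n)=w_n^{k'}$ with $k'\in I_n^-$. Multiplication by $k'$ sends $I_n^+$ bijectively to $I_n^-$, so $\tau(S^+)=S^-$. On the other hand Lemma \ref{auto+-p} (or Lemma \ref{auto+-p-3(4)} in the case $p\equiv 3\bmod 4$) gives $\tau(\sqrt p)=-\sqrt p$, hence $\tau(S^+)=a-b\sqrt p$. Comparing the two expressions for $\tau(S^+)$ yields $c=a$ and $d=-b$, which is the stated conclusion. The only nontrivial point in the whole argument is the subgroup property of $I_n^+$; once that is in hand, everything else is a direct application of the preceding lemmas.
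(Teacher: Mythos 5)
Your proof is correct and follows essentially the same route as the paper: show both sums are fixed by $G_n^+$ (hence lie in $\mathbb Q(\sqrt p)$) and are swapped by $G_n^-$ (which negates $\sqrt p$), so they are conjugate over $\mathbb Q$. You simply make explicit the subgroup/coset structure of $I_n^+$ and $I_n^-$ that the paper's two-line argument leaves implicit.
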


\begin{proof}
	Since $G_n^+$ fixes both sums, they are in $\mathbb Q(\sqrt{p})$. In other words, there exist rational numbers $a$, $b$, $c$ and $d$ such that $$\sum_{j\in I_n^+} w_n^{j\ell}=a+b\sqrt{p} \;\;\; \text{ and } \;\;\; \sum_{j\in I_n^-} w_n^{j\ell}=c+d\sqrt{p}.$$ Since $G_n^-$ sends one to the other, they are conjugate in $\mathbb Q(\sqrt{p})$. Hence $c=a$ and $d=-b$.
\end{proof}

\section{Proof of  Theorem \ref{reduction-thm}}
Let $X$ be a supersingular curve of genus $g$ defined over $\mathbb F_q$ having period $s$. Let $n$ be a positive integer and let $m=\gcd (s,n)$.

Since $X$ is also a curve of genus $g$ over $\mathbb F_{q^m}$, we will consider 
$X$ on $\mathbb F_{q^m}$. 
Write $s=m\cdot u$. Then  $\sqrt{q^m}$ times the roots of $L_{X/\F_{q^m}}$ are $u$-th roots of unity, i.e., the period of $X$ is $u$ over $\mathbb F_{q^m}$ because of equation (\ref{eqn-root-sum-2}).

Define $M_X(T)$ to be $L_{X/\F_{q^m}}(q^{-m/2}T)$.   
Then $M_X$ is monic and the roots of $M_X$ are $\zeta_1^{-m},\cdots,\zeta_{2g}^{-m}$ where $\zeta_i$'s are in equation \eqref{eqn-root-sum-2}. Hence the smallest positive integer $k$ such that $w^k=1$ for all roots $w$ of $M$ is $u$.

Write $n=m\cdot t$. Then we have $(u,t)=1$ and the extension degree $[\mathbb F_{q^n}: \mathbb F_{q^m}]=t$.
In the proofs below, we will reduce the case $t>1$ to the case $t=1$.
We will be using the fact that the roots of the polynomial $L_{X/\F_{q^n}}(q^{-n/2}T)$
of $X/\F_{q^n}$ are the $t$-th powers of the roots of $M_X(T)$.

\subsection{Proof of Theorem \ref{reduction-thm} for $r$ or $m$  even}

 Assume that either $r$ or $m$ is even. Then $M_X(T) \in \mathbb Q[T]  $. 
 Since the roots of $M_X$ are $u$-th roots of unity and $M_X$ is monic, the factorization of $M_X$ in $\mathbb Q[T]$ is as follows:  \begin{equation}\label{M-even}
 M_X(T)=\prod_{d\mid u}\Phi_d(T)^{e_{d}}
 \end{equation} where $e_d$ is a non-negative integer for each $d\mid u$.

  Since $(u,t)=1$, the map $x\to x^t$ permutes the roots of $\Phi_d$ where $d \mid u$. Therefore, we have (by equations \eqref{eqn-root-sum-2} and \eqref{M-even}) \begin{align*}
	-q^{-n/2}[\#X(\F_{q^n})-(q^n+1)]&=\sum_{d\mid u} \left( e_{d}\sum_{j\in I_d} w_d^{-jt}  \right)\\
	&=\sum_{d\mid u} \left( e_{d}\sum_{j\in I_d} w_d^{-j}  \right)\\
	&=-q^{-m/2}[\#X(\F_{q^m})-(q^m+1)].
	\end{align*}

\subsection{Proof of Theorem \ref{reduction-thm} for $r$  and $m$ odd} 
Assume $r$ and $m$ are odd. Note that $u$ must be even, because equality holds in the Hasse-Weil bound.

We have $M_X(T) \in \mathbb Q(\sqrt{p})[T]  
$. Since the roots of $M_X$ are $u$-th roots of unity, we can write $M_X(T)$ as\begin{equation}\label{M-odd}
 M_X(T)=\prod_{d \mid u, \sqrt p \in  \mathbb Q(w_d)}\Phi_d^{+}(T)^{e_{d,1}}\Phi_d^{-}(T)^{e_{d,2}} \prod_{d \mid u, \sqrt p \not \in  \mathbb Q(w_d)}\Phi_d(T)^{e_{d}}
\end{equation} where $e_{d,1}$, $e_{d,2}$ and $e_d$ are non-negative integers for each $d\mid u$.

Since $(u,t)=1$, the map $x\to x^t$ permutes the roots of $\Phi_d$ where $d \mid u$.

	{\bf Case A.} When $p$ does not divide $t$. 
	
	 We have  (by Equation \ref{eqn-root-sum-2} and \ref{M-odd}) \begin{align*}
	 -q^{-n/2}[\#X(F_{q^n})-(q^n+1)]&=\sum_{d\mid u,\sqrt p  \in  \mathbb Q(w_d)} \left( e_{d,1}\sum_{j\in I_d^+} w_d^{-jt} + e_{d,2}\sum_{j\in I_d^-} w_d^{-jt}\right)\\
	 &+\sum_{d\mid u,\sqrt p \not \in  \mathbb Q(w_d)} \left( e_{d}\sum_{j\in I_d} w_d^{-jt}  \right).
	 \end{align*}
	 	 
	{\bf Case 1.} 
	Assume that $t$ is a positive integer such that $\begin{cases}
		\left(\frac{(-1)^{(t-1)/2}t}{p}\right)=1 &\text {if } p \text{ is odd},\\
		t\equiv \pm 1 \mod  8  &\text{if } p=2.
 	\end{cases}$

 	 By Lemmas \ref{auto+-p}, \ref{auto+-p-3(4)} and \ref{auto+-2} the map $x\to x^t$ permutes the roots of $\Phi_d^+$ and  permutes the roots of $\Phi_d^-$ where $d \mid u$. 
	 Hence we have 
	  \begin{align*}
	-q^{-n/2}[\#X(F_{q^n})-(q^n+1)]&=\sum_{d\mid u,\sqrt p \in  \mathbb Q(w_d)} \left( e_{d,1}\sum_{j\in I_d^+} w_d^{-jt} 
	+ e_{d,2}\sum_{j\in I_d^-} w_d^{-jt}\right)\\
	&+\sum_{d\mid u, \sqrt p \not \in  \mathbb Q(w_d)} \left( e_{d}\sum_{j\in I_d} w_d^{-jt}  \right)\\
	&=\sum_{d\mid u,\sqrt p \in  \mathbb Q(w_d)} \left( e_{d,1}\sum_{j\in I_d^+} w_d^{-j} + e_{d,2}\sum_{j\in I_d^-} w_d^{-j}\right)\\
	&+\sum_{d\mid u, \sqrt p \not \in  \mathbb Q(w_d)} \left( e_{d}\sum_{j\in I_d} w_d^{-j}  \right)\\
	&=-q^{-m/2}[\#X(F_{q^m})-(q^m+1)].
	\end{align*}

		{\bf Case 2.} Assume that $t$ is a positive integer such that $\begin{cases}
	\left(\frac{(-1)^{(t-1)/2}t}{p}\right)=-1 &\text {if } p \text{ is odd},\\
	t\equiv \pm 3 \mod  8  &\text{if } p=2.
	\end{cases}$

By Lemmas \ref{auto+-p}, \ref{auto+-p-3(4)} and \ref{auto+-2} the map $x\to x^t$ sends the roots of $\Phi_d^+$ to the roots of $\Phi_d^-$ and vice versa where $d \mid u$. Hence we have 
 \begin{align*}
-q^{-n/2}[\#X(\F_{q^n})-(q^n+1)]&=\sum_{d\mid u,\sqrt p \in  \mathbb Q(w_d)} \left( e_{d,1}\sum_{j\in I_d^+} w_d^{-jt} 
+ e_{d,2}\sum_{j\in I_d^-} w_d^{-jt}\right)\\
&+\sum_{d\mid u, \sqrt p \not \in  \mathbb Q(w_d)} \left( e_{d}\sum_{j\in I_d} w_d^{-jt}  \right)\\
&=\sum_{d\mid u,\sqrt p \in  \mathbb Q(w_d)} \left( e_{d,1}\sum_{j\in I_d^-} w_d^{-j} + e_{d,2}\sum_{j\in I_d^+} w_d^{-j}\right)\\
&+\sum_{d\mid u, \sqrt p \not \in  \mathbb Q(w_d)} \left( e_{d}\sum_{j\in I_d} w_d^{-j}  \right).
\end{align*}

	For any $d$ with $d\mid u$, we write $\sum_{j\in I_d^+} w_d^{-j}=a_d+b_d\sqrt{p}$ where $a_d$ and $b_d$ are rational numbers  (such rational numbers exist by Corollary \ref{sqrt-+cor}). Then we have  $\sum_{j\in I_d^-} w_d^{-j}=a_d-b_d\sqrt{p}$ by Corollary \ref{sqrt-+cor}. Therefore, we 
	also have $\sum_{j\in I_d} w_d^{-j}=2a_d$. Hence the last line
	  \begin{align*}
&\sum_{d\mid u,\sqrt p \in  \mathbb Q(w_d)} \left( e_{d,1}\sum_{j\in I_d^-} w_d^{-j} + e_{d,2}\sum_{j\in I_d^+} w_d^{-j}\right)
+\sum_{d\mid u, \sqrt p \not \in  \mathbb Q(w_d)} \left( e_{d}\sum_{j\in I_d} w_d^{-j}  \right)\\
	&=\sum_{d\mid u,\sqrt p \in  \mathbb Q(w_d)} \left( e_{d,1}(a_d-b_d\sqrt{p}) + e_{d,2}(a_d+b_d\sqrt{p})\right)
+\sum_{d\mid u, \sqrt p \not \in  \mathbb Q(w_d)} \left( e_{d}\cdot 2a_d \right).
\end{align*}

		We have therefore shown that 
		\begin{align*}
			-q^{-n/2}[\#X(\F_{q^n})-(q^n+1)]
			&=\sum_{d\mid u,\sqrt p \in  \mathbb Q(w_d)} \left( e_{d,1}(a_d-b_d\sqrt{p}) + e_{d,2}(a_d+b_d\sqrt{p})\right)\\
			&+\sum_{d\mid u, \sqrt p \not \in  \mathbb Q(w_d)} \left( e_{d}\cdot 2a_d \right).
		\end{align*}
		
Note that $n$ is odd because $s$ is even and $m$ is odd.
Also $r$ is odd (where $q=p^r$) and $\#X(\F_{q^n})-(q^n+1)$ is an integer, so
$-q^{-n/2}[\#X(\F_{q^n})-(q^n+1)]$ must have the form $d\sqrt{p}$ where $d$ is a rational number.
Therefore
we must have 
\begin{align}\label{qn1}
-q^{-n/2}[\#X(\F_{q^n})-(q^n+1)]&=\sum_{d\mid u,\sqrt p \in  \mathbb Q(w_d)} \left( (e_{d,2}- e_{d,1})b_d\sqrt{p}\right).
\end{align}
By the same argument as in the previous paragraph when $n=m$,
we get that $-q^{-m/2}[\#X(\F_{q^m})-(q^m+1)]$ has the form $c\sqrt{p}$ where $c$ is a rational number.

Now we apply the same reasoning as above when $n=m$.
We get 
 \begin{align*}
-q^{-m/2}[\#X(\F_{q^m})-(q^m+1)]
&=\sum_{d\mid u,\sqrt p \in  \mathbb Q(w_d)} \left( e_{d,1}\sum_{j\in I_d^+} w_d^{-j} + e_{d,2}\sum_{j\in I_d^-} w_d^{-j}\right)\\
&+\sum_{d\mid u, \sqrt p \not \in  \mathbb Q(w_d)} \left( e_{d}\sum_{j\in I_d} w_d^{-j}  \right)\\
&=\sum_{d\mid u,\sqrt p \in  \mathbb Q(w_d)} \left( e_{d,1}(a_d+b_d\sqrt{p}) + e_{d,2}(a_d-b_d\sqrt{p})\right)\\
			&+\sum_{d\mid u, \sqrt p \not \in  \mathbb Q(w_d)} \left( e_{d}\cdot 2a_d \right)\\
			&=\sum_{d\mid u,\sqrt p \in  \mathbb Q(w_d)} \left( (e_{d,1}- e_{d,2})b_d\sqrt{p}\right).
\end{align*}  
Comparing this last expression with \eqref{qn1} we see that
\[
-q^{-n/2}[\#X(\F_{q^n})-(q^n+1)]
=-q^{-m/2}[\#X(\F_{q^m})-(q^m+1)]\cdot(-1).
\]

{\bf Case B.} When $p$  divides $t$.

Since $(t,u)=1$ and $p \mid t$, we have $p\nmid u$. Therefore,  $\sqrt{p}\not \in \mathbb Q(w_d)$ for each $d\mid u$ and so
\begin{equation}\label{M-odd-2}
M_X(z)=\prod_{d \mid u}\Phi_d(z)^{e_d}
\end{equation}
where $e_d$'s are non-negative integers. Hence, we have (by Equation \ref{eqn-root-sum-2} and \ref{M-odd-2}) 
\begin{align*} 
-q^{n/2}[\#X(F_{q^n})-(q^n+1)]&=\sum_{d\mid u} \left( e_{d}\sum_{j\in I_d} w_d^{-jt}  \right)\\&=\sum_{d\mid u} \left( e_{d}\sum_{j\in I_d} w_d^{-j}  \right)\\&=-q^{m/2}[\#X(F_{q^m})-(q^m+1)].
\end{align*}

\section{Proof of Theorem \ref{thm-l-poly}}

In this section we will give the proof of Theorem \ref{thm-l-poly}. The following 
well-known proposition will be useful to prove the theorem (see \cite{AFF-HS} for example). 

\begin{prop}\label{prop-L-Si}
Let $X$ be a supersingular curve of genus $g$ defined over $\mathbb F_q$. Let $L_X(T)= \sum_{i=0}^{2g} c_i T^i$ be the $L$-polynomial of $X$ and let $S_n=\#X(\mathbb F_{q^n})-(q^n+1)$ for all integers $n\ge 1$. Then $c_0=1$ and $$ic_i=\sum_{j=0}^{i-1}S_{i-j}c_j$$ for $i=1,\cdots,g$.
\end{prop}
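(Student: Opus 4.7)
The plan is to derive the identity directly from the exponential definition of the $L$-polynomial via a formal logarithmic derivative; the supersingular hypothesis plays no role here, as the recurrence is valid for any smooth projective curve. I would present it in three short steps.

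First, from the defining formula
\[
L_X(T)=\exp\!\left(\sum_{n=1}^\infty S_n\,\frac{T^n}{n}\right),
\]
evaluation at $T=0$ gives $L_X(0)=\exp(0)=1$, hence $c_0=1$. Setting $f(T)=\sum_{n\ge 1}S_nT^n/n$, so that $L_X(T)=\exp f(T)$, formal differentiation yields $L_X'(T)=f'(T)L_X(T)$, i.e.
\[
L_X'(T)=L_X(T)\cdot\sum_{n=1}^{\infty}S_nT^{n-1}.
\]

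Second, I substitute the expansions $L_X(T)=\sum_{j=0}^{2g}c_jT^j$ and $L_X'(T)=\sum_{i=1}^{2g}ic_iT^{i-1}$ into this identity and compare coefficients of $T^{i-1}$. Since the series $\sum_{n\ge 1}S_nT^{n-1}$ has no constant term, the only pairs $(j,n)$ with $j+(n-1)=i-1$, $j\ge 0$, $n\ge 1$ are those with $0\le j\le i-1$ and $n=i-j\ge 1$. The Cauchy product therefore contributes $\sum_{j=0}^{i-1}S_{i-j}c_j$ in degree $i-1$, and matching this to $ic_i$ gives exactly
\[
ic_i=\sum_{j=0}^{i-1}S_{i-j}c_j
\]
for every $i\ge 1$; restricting to $1\le i\le g$ yields the statement of the proposition.

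There is essentially no obstacle: the whole argument is a one-line formal power-series manipulation. The only minor bookkeeping is verifying that the upper summation limit is $i-1$ rather than $i$, which is automatic because $\sum S_nT^{n-1}$ has no $T^0$ term (no $S_0$ appears). Note that one does not even need the Hasse--Weil bound or the degree $2g$ of $L_X$ for this identity; both sides are equalities of formal power series, and the proposition simply records the first $g$ such equalities.
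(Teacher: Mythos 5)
Your proof is correct. The paper itself gives no proof of this proposition --- it records it as well known and cites Stichtenoth --- so there is nothing to compare against; your argument via the logarithmic derivative $L_X'(T)=L_X(T)\sum_{n\ge1}S_nT^{n-1}$ and coefficient comparison is exactly the standard derivation that the citation points to, and your observation that the supersingular hypothesis is irrelevant here is accurate.
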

\begin{proof}[Proof of Theorem \ref{thm-l-poly}]
	By Proposition \ref{prop-L-Si}, we have $S_1=c_1$ and $$S_i= ic_i-\sum_{j=1}^{i-1}S_{i-j}c_j$$  for $2\le i \le g$. Since $c_1,\cdots, c_{l-1}$ are known, we can inductively find $S_1, \cdots, S_{l-1}$ by above expression. Since $d=(l,s) < l$ and since we know what $S_d$ is, we can find $S_l$ by Theorem \ref{reduction-thm}. Therefore, since \begin{equation}\label{eqn-cl}
	c_l=\frac{1}{l}\sum_{j=0}^{l-1}S_{l-j}c_j 
	\end{equation}by Proposition \ref{prop-L-Si} and since $c_0,c_1,\cdots, c_{l-1}$ and $S_1,\cdots, S_l$ are known,  we can find $c_l$.   
\end{proof}

\section{Applications}

In this section we present a few applications of our results.

\subsection{An Example}

The usefulness of Theorems  \ref{reduction-thm} and \ref{thm-l-poly} is shown by an example.
Let is consider the curve $C: y^5-y=x^6$ over $\mathbb F_5$
and calculate its L-polynomial. 
This curve is actually the curve denoted $B_1^{(5)}$ in \cite{MY}.
and is a curve of genus $10$.
By Corollary 1 in \cite{MY} the roots of the L-polynomial are
 $\sqrt 5$ times a $4$-th root of unity (i.e., the period $s$ is 4). 
 Write $L_C$ as $$L_C(T)= \sum_{i=0}^{20} c_i T^i.$$ 
 Normally the values $c_i$ for $1 \le i\le 10$ have to be computed, and then the
 L-polynomial is determined.
 We will show using Theorem  \ref{thm-l-poly} that only three of the $c_i$ need to
 be computed in order to determine the entire L-polynomial. 
 
 The three values needed are $c_1$, $c_2$ and $c_4$, because 1, 2, and 4 are the
 divisors of the period.
 We calculate $c_1=0$, $c_2=-10$ and $c_4=-75$. 
 In order to find $L_C$ we have to find $c_3,c_5,c_6,c_7,c_8,c_9$ and $c_{10}$. 

Let us apply the recursion in Theorem \ref{thm-l-poly}. Firstly, we have $S_1=c_1=0$ and $S_2=2s_2-S_1c_1=-20$.

Next, we may apply Theorem  \ref{reduction-thm} to get $S_3$ since $(3,4)=1$.
 Thus  by 
Theorem \ref{reduction-thm} applied with the values
$s=4$, $n=3$, $m=1$, $r=1$, $t=3$, we get
\[
S_3=5^{(n-m)/2} \left( \frac{(-1)^{(t-1)/2}t}{5} \right) S_1 =0.
\]
Since we have $$3c_3=S_3c_0+S_2c_1+S_1c_2$$ by Proposition \ref{prop-L-Si}, we get $c_3=0$. 

Since we know $c_4=-75$ we get
$$S_4=4c_4-S_3c_1-S_2c_2-S_1c_1=-500.$$


In a similar way to $S_3$ we  find all $S_i$ for $i=5,\cdots, 10$ by Theorem \ref{reduction-thm}. 
These numbers are  $$S_5=S_7=S_9=0, \quad S_6=-500, \quad S_8=S_{10}=-12500.$$  
Then we can find all $c_i$ for $i=5,\cdots, 10$ by Theorem \ref{thm-l-poly}, inductively using the Equation \eqref{eqn-cl}.  We get $c_5=c_7=c_9=0$ 
and
$$c_6=\frac{1}{6}\sum_{j=0}^{5}S_{6-j}c_l=1000, \quad c_8=\frac{1}{8}\sum_{j=0}^{7}S_{8-j}c_j =1250, \quad  c_{10}=\frac{1}{10}\sum_{j=0}^{9}S_{10-j}c_j =-37500.$$ Therefore the $L$-polynomial of $C$ over $\mathbb F_5$ is 
\begin{align*}
1&-10T^2-75T^4+1000T^6+1250T^8-37500T^{10}\\ &+31250T^{12}+625000T^{14}-1171875T^{16}-3906250T^{18}+9765625T^{20}.
\end{align*}

\subsection{Families of Curves}

The authors have used the two main theorems of this paper in \cite{MY}
to calculate the exact number of rational points on curves
$y^p-y=x^{p^k+1}$ and $y^p-y=x^{p^k+1}+x$
over all extensions of $\F_p$.
The same techniques can be used for other supersingular curves.

Another example of applying these results can be found in 
our preprint \cite{MY2}. There we calculated the exact number of points
on $y^q-y=x^{q+1}-x^2$ in order to count the number of irreducible polynomials
with the first two coefficients fixed (providing another proof of a result of Kuzmin).

\subsection{Point Counting}

The results of this paper imply a speedup for point counting algorithms
for supersingular curves, at least in theory.
In general, to calculate the L-polynomial of a curve defined over $\F_q$,
one needs to compute the number of $\F_{q^i}$-rational points for all $i=1,2,\ldots ,g$.
However, Theorem 2 means that not all of these values are needed for supersingular curves.
The values that are needed are the number of $\F_{q^i}$-rational points where $i$ divides $s$.
As seen in the example above, we only needed four values 
($i=1,2,4$) instead of ten values.


\section{Value of the Period}

To apply the results of this paper one needs to know the period $s$ (or a multiple of $s$)
of a supersingular curve $X$ of genus $g$.
It is sometimes possible to find the period without finding the L-polynomial.
This is often the case for Artin-Schreier curves of the form
$y^p-y=xL(x)$ where $L(x)$ is a linearized polynomial.
Indeed, this is what the authors did in \cite{MY} for the curves
$y^p-y=x^{p^k+1}$ and $y^p-y=x^{p^k+1}+x$.
Another example of this is provided in \cite{KW}
with the hyperelliptic curves $y^2=x^{2g+1}+1$ where $(p,2g+1)=1$.
The period is shown to be the smallest $k$ such that $p^k \equiv -1 \pmod{4g+2}$.

For a simple supersingular abelian variety, we have $\phi (s)=2g$ or $4g$
(see \cite{MSZ}) where $\phi$ is the Euler phi-function.
Therefore if the Jacobian of $X$ is simple, we know that $\phi (s)=2g$ or $4g$.
This can be used to make a list of possible values of $s$.
If the Jacobian of $X$ is not simple it is isogenous to 
a product of simple abelian varieties of smaller dimension.

We call $X(\mathbb F_{q^n})$ \emph{minimal} if all the Weil numbers are $\sqrt{q^n}$.
Equivalently, $X(\mathbb F_{q^n})$ is minimal
if and only if $L_X(T)=(1-\sqrt{q^n} T)^{2g}$.
Thus, for a supersingular curve $X$ defined over $\F_q$, the period is the smallest extension degree over which $X$ is minimal.

\end{document}